\title{\textbf{The characteristic polynomial of an algebra and representations}}
\author{
	Rajesh S.~Kulkarni 
		\footnote{Michigan State University, East Lansing, Michigan. {\sf kulkarni@math.msu.edu}},
	Yusuf Mustopa 
		\footnote{Tufts University, Medford, Massachusetts. {\sf Yusuf.Mustopa@tufts.edu}}, and 
	Ian Shipman 
		\footnote{University of Utah, Salt Lake City, Utah. {\sf ian.shipman@gmail.com}}
}
\begin{document}
\maketitle

Suppose that $\bk$ is a field and let $A$ be a finite dimensional, associative, unital $\bk$-algebra.  Often one is interested in studying the finite-dimensional representations of $A$.  Of course, a finite dimensional representation of $A$ is simply a finite dimensional $\bk$-vector space $M$ and a $\bk$-algebra homomorphism $A \to \End_\bk(M)$.  In this article we will not consider representations of algebras, but rather how to determine if a $\bk$-linear map $\phi:A \to \End_\bk(M)$ is actually a homomorphism.  We restrict our attention to the case where $A$ is a product of field extensions of $\bk$.  If $\phi:A \to \End_\bk(M)$ is a representation then certainly, if $a \in A$ satisfies $a^m = 1$ then $\phi(a)^m = \id$ as well.  Our first Theorem is a remarkable converse to this elementary observation.

\begin{customthm}{A}\label{thm:roots}
Suppose that $A = \bk^d$ and $\phi:A \to \End_\bk(M)$ is a $\bk$-linear map.  Let $n > 2$ be a natural number and assume that $\bk$ has $n$ primitive $n^{th}$ roots of unity.  If $\phi(1_{A}) = \id$ and for each $a \in A$ such that $a^n = 1_{A}$ we have $\phi(a)^n = \id,$ then $\phi$ is an algebra homomorphism.
\end{customthm}

Consider the regular representation $\mu_L:A \to \End_\bk(A)$ of $A$ on itself by left multiplication.  For $a \in A$, let $\chi_a(t)$ and $\ol{\chi}_a(t)$ be the characteristic and minimal polynomials of $\mu_L(a)$, respectively.  We note that $\chi_a(a) = \ol{\chi}_a(a) = 0$ in $A$.  Therefore if $M$ is a finite dimensional left $A$ module with structure map $\phi:A \to \End_\bk(M)$ then $\chi_a(\phi(a)) = \ol{\chi}_a(\phi(a)) = 0$ in $\End_\bk(M)$.  The notion of assigning a characteristic polynomial to each element of an algebra and considering representations which are compatible with this assignment has appeared in \cite{Pr}.  This idea has been applied to some problems in noncommutative geometry as well \cite{L}.  However, as far as we know the following related notion is new.

\begin{definition}  Suppose that $\phi:A \to \End_\bk(M)$ is a $\bk$-linear map, where $M$ is a finite dimensional $\bk$-vector space.  We say that $\phi$ is a \emph{characteristic morphism} if $\chi_a(\phi(a)) = 0$ for all $a \in A$.  We say that $\phi$ is \emph{minimal-characteristic} if, moreover, $\ol{\chi}_a(\phi(a)) = 0$ for all $a \in A$.
\end{definition}

It is natural to ask whether or not the notions of characteristic morphism and minimal characteristic morphism are weaker than the notion of algebra morphism.  Let us address minimal-characteristic morphisms first.

\begin{cor*}
Assume that $A = \bk^d$ and that $\bk$ has a full set of $d^{th}$ roots of unity.  Then a minimal-characteristic morphism $\phi:A \to \End_\bk(M)$ is an algebra morphism.
\end{cor*}
\begin{proof}
First note that $\ol{\chi}_1(t) = t -1$.  Hence $\phi(1) = \id$.  Furthermore if $a \in A$ satisfies $a^d = 1$ then $\ol{\chi}_a(t)$ divides $t^d -1$.  Therefore, $\phi(a)^d = \id$.  Hence, if $d > 2$ then Theorem \ref{thm:roots} implies that $\phi$ is an algebra morphism.  We leave the cases $d = 1,2$ for the reader.
\end{proof}

\begin{example}\label{exm1}
Let $a,b \in \bk$ be such that $a + b \neq 0$.  Then the map $\phi:\bk^{\times 2} \to \Mat_2(\bk)$ given by
\[ \phi(e_1) = \begin{pmatrix} 1 & a \\ 0 & 0 \end{pmatrix}, \quad \phi(e_2) = \begin{pmatrix} 0 & b \\ 0 & 1 \end{pmatrix} \]
is a characteristic morphism that is not a representation.  
\end{example}

Characteristic morphisms form a category in a natural way.  Any linear map $\phi:A \to \End_\bk(M)$ endows $M$ with the structure of a $\rmT(A)$ module, where $\rmT(A)$ denotes the tensor algebra on $A$.  We can view the characteristic polynomial of elements of $A$ as a homogeneous form $\chi(t) \in \Sym^\bt(A^\vee)[t]$ of degree $d$, monic in $t$.  Pappacena \cite{P} associates to such a form an algebra
\[ C(A) = \frac{\rmT(A)}{\langle \chi_a(a) : a \in A \rangle}, \]
where if $\chi_a(t) = \sum_{i=0}^d{c_i(a)t^i}$ then 
\[ \chi_a(a) := \sum_{i=0}^d{ c_i(a) a^{\tensor i} } \in \rmT(A). \]
Clearly, the action map $\phi:A \to \End_\bk(M)$ of a $\rmT(A)$-module $M$ is a characteristic morphism if and only if the action of $\rmT(A)$ factors through $C(A)$.  We declare the category of characteristic morphisms to be the category of finite-dimensional $C(A)$-modules.  So we have a notion of irreducible characteristic morphism.  The characteristic morphism constructed in Example \ref{exm1} is not irreducible, being an extension of two irreducible characteristic morphisms.  However, every irreducible characteristic morphism $\bk^{\times 2} \to \End_\bk(M)$ is actually an algebra morphism.  The following example shows this is not always the case.

\begin{example}
The linear map $\phi : \bk^{\times 3} \to \Mat_3(\bk)$ defined by
\[ e_1 \mapsto \frac{1}{2}\begin{pmatrix} 0 & 1 & 1 \\ 0 & 1 & -1 \\ 0 & -1 & 1  \end{pmatrix},
 \quad e_2 \mapsto \frac{1}{2}\begin{pmatrix} 1 & 0 & -1 \\ 1 & 0 & 1 \\ -1 & 0 & 1 \end{pmatrix},
 \quad e_3 \mapsto \frac{1}{2}\begin{pmatrix} 1 & -1 & 0 \\ -1 & 1 & 0 \\ 1 & 1 & 0 \end{pmatrix} \]
is an irreducible characteristic morphism, but not an algebra morphism; while $e_1^{2} = e_1,$ it can be checked that $\phi(e_{1})^{2} \neq e_1.$
\end{example}

Given a linear map $\phi:A \to \End_\bk(M)$, let $T_\phi \in A^\vee \tensor \End_\bk(M)$ be the element that corresponds to $\phi$ under the isomorphism $\Hom_\bk(A,\End_\bk(M)) \cong A^\vee \tensor \End_\bk(M)$.  We view $T_\phi$ as an element of $\Sym^\bt(A^\vee) \tensor \End_\bk(M)$.  The equation $\chi_a(\phi(a))=0$ for all $a \in A$ holds if and only if $\chi_A(T_\phi) = 0$ in $\Sym^\bt(A^\vee)\tensor \End_\bk(M)$.  We can just as easily view $T_\phi$ as an element of $\rmT(A^\vee)\tensor \End_\bk(M)$.  Moreover, we can lift $\chi$ from $\Sym^\bt(A^\vee)[t]$ to $\rmT(A^\vee) \ast \bk[t]$ by the na\"{i}ve symmetrization map $\Sym^\bt(A^\vee)[t] \to \rmT(A^\vee)\ast \bk[t]$.  

\begin{customthm}{B}\label{thm:noncommutative}
  Assume that $\chr(\bk)$ is either $0$ or greater than $d$.  Let $A = \bk^{\times d}$ and $\phi:A \to \End_\bk(M)$ a $\bk$-linear map.  The map $\phi$ factors through a homomorphism $A\to \End_\bk(M)$ if and only if $\chi(T) = 0$ in $\rmT(A^\vee) \tensor_\bk B$.
\end{customthm}

\textbf{Acknowledgments.} We would like to thank Ted Chinburg and Zongzhu Lin for interesting conversations.  I.S. was partially supported during the preparation of this paper by National Science Foundation award DMS-1204733.  R. K. was partially supported by the National Science Foundation awards DMS-1004306 and DMS-1305377.


\section*{Proofs}
We now turn to the proofs of the results in the introduction.  The proof of the Theorem \ref{thm:roots} depends on an arithmetic Lemma.
\begin{lem}\label{lem:arithmetic}
Let $\zeta \in \bk$ be a primitive $n^{th}$ root of unity.  Suppose that $a,b,c,d \in \Z$ satisfy $b,d \neq 0 \mod n$ and
\[ \frac{ \zeta^a - 1}{\zeta^b -1 } = \frac{ \zeta^c -1 }{ \zeta^d -1 }. \]
Then either:
\begin{enumerate}
\item $a \equiv b \mod n$ and $c \equiv d \mod n$, or
\item $a \equiv c \mod n$ and $b \equiv d \mod n$.
\end{enumerate}
\end{lem}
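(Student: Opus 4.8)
The plan is to pass to $\mathbb{C}$ and read off from the given identity both its modulus and its argument; between them these determine $a,b,c,d$ modulo $2n$, after which a short trigonometric manipulation forces one of the two alternatives. First I would dispose of degeneracies: if $a\equiv 0\pmod n$ then $\zeta^a-1=0$, forcing $\zeta^c-1=0$ and hence $c\equiv 0\pmod n$; setting this case aside, I assume $1\le a,b,c,d\le n-1$. The relation $(\zeta^a-1)(\zeta^d-1)=(\zeta^c-1)(\zeta^b-1)$ takes place inside the subring $\Z[\zeta]\subseteq\bk$, and (we assume $\chr(\bk)=0$) there is a ring embedding $\Z[\zeta]\hookrightarrow\mathbb{C}$ sending $\zeta$ to $e^{2\pi i k/n}$ for some $k$ coprime to $n$. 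Since multiplication by $k$ is a bijection of $\Z/n$ preserving nonzero residues, after replacing each exponent by $k$ times it I may assume that $\zeta=e^{2\pi i/n}$ and $a,b,c,d\in\{1,\dots,n-1\}$; the congruences to be proved for the new exponents are equivalent to those for the old ones.

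Next I would use the polar form $\zeta^k-1=2i\,e^{i\pi k/n}\sin(\pi k/n)$, so that for $1\le k\le n-1$ one has $|\zeta^k-1|=2\sin(\pi k/n)>0$ and $\arg(\zeta^k-1)=\tfrac{\pi k}{n}+\tfrac{\pi}{2}$. Comparing moduli across the identity gives
\[ \sin\tfrac{\pi a}{n}\,\sin\tfrac{\pi d}{n}=\sin\tfrac{\pi b}{n}\,\sin\tfrac{\pi c}{n}, \]
while comparing arguments modulo $2\pi$ gives $\tfrac{\pi(a+d)}{n}+\pi\equiv\tfrac{\pi(b+c)}{n}+\pi$, that is,
\[ a+d\equiv b+c\pmod{2n}. \]
It matters that one compares the arguments themselves modulo $2\pi$, rather than (say) the ratios $z/\bar z$, which would give only $a+d\equiv b+c\pmod n$; the sharper congruence modulo $2n$ is exactly what permits the division by $2$ below when $n$ is even.

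To finish, I would invoke $2\sin X\sin Y=\cos(X-Y)-\cos(X+Y)$ to rewrite the modulus identity as
\[ \cos\tfrac{\pi(a-d)}{n}-\cos\tfrac{\pi(a+d)}{n}=\cos\tfrac{\pi(b-c)}{n}-\cos\tfrac{\pi(b+c)}{n}. \]
Since $a+d\equiv b+c\pmod{2n}$, the two subtracted cosines agree, leaving $\cos\tfrac{\pi(a-d)}{n}=\cos\tfrac{\pi(b-c)}{n}$, hence $a-d\equiv\pm(b-c)\pmod{2n}$. Adding this to, and subtracting it from, $a+d\equiv b+c\pmod{2n}$: in the $+$ case we get $2a\equiv 2b$ and $2c\equiv 2d\pmod{2n}$, so $a\equiv b$ and $c\equiv d\pmod n$, which is alternative (1); in the $-$ case we get $2a\equiv 2c$ and $2b\equiv 2d\pmod{2n}$, so $a\equiv c$ and $b\equiv d\pmod n$, which is alternative (2). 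The hypothesis $b,d\not\equiv 0\pmod n$ enters only through the need for the fractions, hence the renamed exponents, to be defined.

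The main obstacle, I expect, is care rather than insight: one must check that replacing $\zeta$ by $e^{2\pi i/n}$ does not affect the conclusion, and --- the sharper point --- one must pin down the argument of $(\zeta^a-1)(\zeta^d-1)$ precisely enough to reach the congruence modulo $2n$ and not merely modulo $n$. I would also flag that some restriction on $\chr(\bk)$ is genuinely needed, both for this argument and for the statement itself: over $\mathbb{F}_5$ with $n=4$ and $\zeta=2$ one has $(\zeta^2-1)(\zeta^3-1)=(\zeta-1)^2$, while none of the displayed congruences hold. A purely algebraic route, via the classification of the short vanishing sums of roots of unity implicit in $\zeta^{a+d}+\zeta^b+\zeta^c=\zeta^a+\zeta^d+\zeta^{b+c}$, is possible in principle but messier.
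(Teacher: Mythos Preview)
Your argument is correct and takes a different route from the paper's. The paper works algebraically: it applies the field automorphism $\zeta \mapsto \zeta^{-1}$ to the identity, and dividing the resulting relation by the original yields $\zeta^{b-a} = \zeta^{d-c}$; writing $e$ for this common residue and substituting $a=b-e$, $c=d-e$ back into the original, cross-multiplication collapses to $(\zeta^d - \zeta^b)(1 - \zeta^{-e}) = 0$, whose two factors give the two alternatives. You instead pass to $\mathbb{C}$, separate modulus from argument, and finish with a product-to-sum identity. The paper's route is shorter and stays inside the field, while yours makes visible why $a+d \equiv b+c$ holds modulo $2n$ and not merely modulo $n$---a refinement the paper never needs, since its manipulation avoids dividing by $2$. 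Underneath, both proofs extract the same conjugate relation (your modulus/argument split is complex conjugation in another guise), so the difference is packaging rather than idea.

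Your characteristic-$p$ warning is apt and applies equally to the paper: the automorphism $\zeta \mapsto \zeta^{-1}$ need not exist over $\mathbb{F}_p$, and your $\mathbb{F}_5$, $n=4$ example shows the statement itself can fail. One small gap you share with the paper: when $a \equiv 0$ you deduce $c \equiv 0$ and stop, but alternative~(2) also demands $b \equiv d$, which need not hold (take $a=c=0$, $b=1$, $d=2$ with $n\ge 3$); the paper's division step likewise silently assumes the common ratio is nonzero.
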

\begin{proof}
After possibly passing to a finite extension we may assume that $\bk$ admits an automorphism sending $\zeta$ to $\zeta^{-1}$.  Thus we have
\[ 
\frac{ \zeta^{-a} - 1 }{ \zeta^{-b} - 1 } = \frac{ \zeta^{-c} - 1}{\zeta^{-d} - 1}, 
\]
which we rewrite
\[
\frac{ \zeta^{-a} }{\zeta^{-b} } \cdot \frac{ 1 - \zeta^a }{ 1 - \zeta^b } = \frac{ \zeta^{-c} }{\zeta^{-d}} \cdot \frac{ 1 - \zeta^c }{1-\zeta^d}.
\]
Using our assumption we find that $\zeta^{b-a} = \zeta^{d-c}$.  Thus $b - a \equiv d - c \mod n$.  Let $e = b - a \equiv d-c \mod n$.  Then we have
\[ \frac{ \zeta^{b-e} -1}{\zeta^b -1 } = \frac{ \zeta^{d-e} -1}{\zeta^d -1 } \]
which implies that
\[ \zeta^{b-e} + \zeta^d = \zeta^{d-e} + \zeta^b. \]
Finally we see that 
\[ \zeta^d - \zeta^b = (\zeta^d - \zeta^b)\zeta^{-e} \]
Therefore either $e \equiv 0 \mod d$ so that (1) holds, or $d \equiv b$ so that (2) holds.
\end{proof}

\begin{proof}[Proof of Theorem \ref{thm:roots}]
Whether or not $\phi$ is an algebra homomorphism is stable under passage to the algebraic closure of $\bk$.  So we may assume that $\bk$ is algebraically closed.  Let $e_1,\dotsc,e_d \in A$ be a complete set of orthogonal idempotents.  Put $\alpha_i = \phi(e_i)$ and note that by hypothesis $\alpha_1 + \dotsm + \alpha_d = \id$.  Fix a primitive $n^{th}$ root of unity $\xi$.  Then $x = 1 + (\xi - 1)e_i$ satisfies $x^n = 1$.  Therefore $\phi(x)^d = \id$.  This implies that $\phi(x)$ is diagonalizable and each eigenvalue is an $n^{th}$ root of unity.  Now, since $\phi$ is linear,
\[ \alpha_i = \frac{ \phi(x) - \id }{\xi - 1 } \]
and hence $\alpha_i$ is diagonalizable as well.  Let $\lambda$ be an eigenvalue of $\alpha_i$.  Then for some $a$ we have
\[ \lambda = \frac{\xi^a - 1}{\xi - 1}. \]
Now for any $b$, $\phi( 1 + (\xi^b - 1)e_i )^d = \id$.  So we see that 
\[ 1 + \lambda (\xi^b - 1) \]
must be a root of unity for every $b$.  However, if  
\[ 1 + \lambda (\xi^b -1) = \xi^c \]
then Lemma \ref{lem:arithmetic} implies that either $a \equiv 1 \mod n$, $\lambda = 0$, or $b \equiv 1 \mod n$.  Now, $b$ is under our control and since $n \geq 3$ we can choose $b \neq 0,1 \mod n$, excluding the third case.  If $a \equiv 1 \mod n$ then $\lambda = 1$ and otherwise $\lambda = 0$.  Thus $\alpha_i$ is semisimple with eigenvalues equal to zero or one.  So $\alpha_i^2 = \alpha_i$.

Let $i \neq j$ and consider
\[ y_a = \id + (\xi^a -1) (\alpha_i +\alpha_j) \]
Clearly, $y_a^n = \id$ and thus $y_a$ is semisimple.  We compute
\[ (y_a - \id)^2 = (\xi^a-1)^2( \alpha_i \alpha_j + \alpha_j \alpha_i ) + (\xi^a - 1)(y_a - \id) \]
and deduce that 
\begin{equation}\label{eq:anticommutator} (\xi_a -1)^{-2}(y_a -\id)(y_a - \xi^a) = (\alpha_i \alpha_j + \alpha_j \alpha_i). \end{equation}
Assume that $b \neq 0 \mod n$.  Observe that $y_a - \id = \frac{\xi^a -1}{\xi^b - 1} (y_b - \id)$ and therefore, $y_a$ and $y_b$ are simultaneously diagonalizable.  Suppose that $\xi^c$ is an eigenvalue of $y_b$ unequal to 1.  Then
\[ \frac{\xi^a - 1}{\xi^b -1}(\xi^c - 1) + 1 = \xi^e\]
is an eigenvalue of $y_a$.  Since $n \geq 3$ we can assume that $a \neq b,0 \mod n$.  Then Lemma \ref{lem:arithmetic} implies that $e \equiv a \mod n$ and $b \equiv c \mod n$.  We see that the only eigenvalues of $y_b$ are 1 and $\xi^b$.

Because $y_b$ is semisimple, this means that the right side of \eqref{eq:anticommutator} vanishes.  So $\alpha_i \alpha_j = - \alpha_j \alpha_i$ for all $i,j$.  Suppose that $\alpha_i(m) = m$.  Then $\alpha_j(\alpha_i(m)) = \alpha_j(m) = - \alpha_i(\alpha_j(m))$.  Since $-1$ is not an eigenvalue of $\alpha_i$ we see that $\alpha_j(m) = 0$.  Now let $m \in M$ and write $m = m_0 + m_1$ where $\alpha_i(m_0) = 0$ and $\alpha_i(m_1) = m_1$.  Then 
\[ \alpha_i(\alpha_j(m)) = \alpha_i(\alpha_j(m_0)) = - \alpha_j(\alpha_i(m_0)) = 0. \]
Thus we see that in fact $\alpha_i \alpha_j = 0$.  So $\alpha_1,\dotsc,\alpha_d$ satisfy the defining relations of $\bk^{\times d}$ and $\phi$ is actually an algebra homomorphism.
\end{proof}

We now turn to the proof of Theorem \ref{thm:noncommutative}.  The key idea is to use the fact that the single equation $\chi(T_\phi)=0$ over the tensor algebra encodes many relations for the matrices defining $\phi$.  It is convenient to consider $\alpha_i = \phi(e_i)$, where $e_i$ is the standard basis of idempotents in $\bk^{\times d}$.  Furthermore we write $\chi_d$ for the characteristic polynomial of $\bk^{\times d}$ viewed as an element of $\bk\langle x_1,\dotsc,x_d,t\rangle$ (where $x_1,\dotsc,x_d$ is the dual basis to $e_1,\dotsc,e_d$).

\begin{lem}\label{lem:actions} 
Suppose that $\bk$ is a field with $char(\bk) > d$.  Let $\alpha_1,\dotsc,\alpha_d \in M_n(\bk)$ and put $T = x_1 \alpha_1 + \dotsc + x_d \alpha_d$.  If $T$ satisfies $\chi_d$ then
\begin{enumerate}
\item for some $i =1,\dotsc,d$, $\alpha_i$ has a 1-eigenvector, and
\item if $m \in \bk^n$ satisfies $\alpha_im = m$ then $\alpha_j m = 0$ for all $j \neq i$.
\end{enumerate}
\end{lem}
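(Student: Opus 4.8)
The plan is to exploit the fact that $\chi_d(T) = 0$ is a single equation in the tensor algebra $\rmT(A^\vee) \tensor \End_\bk(M)$, and hence is equivalent to the vanishing of each of its multi-homogeneous components. First I would write down $\chi_d(t)$ explicitly: since $\bk^{\times d}$ has regular representation with matrix $\mathrm{diag}(a_1,\dotsc,a_d)$ for the element $\sum a_i e_i$, we have $\chi_d(t) = \prod_{i=1}^d (t - x_i)$ as an element of $\bk[x_1,\dotsc,x_d][t]$. Applying the naive symmetrization $\Sym^\bt(A^\vee)[t] \to \rmT(A^\vee)\ast\bk[t]$ and substituting $t \mapsto x_1\tensor\dotsb$ actually means: we expand $\prod_i(t-x_i) = \sum_{k=0}^d (-1)^{d-k} e_{d-k}(x) t^k$ where $e_j$ is the $j$-th elementary symmetric polynomial, then replace $t^k$ by the sum over all length-$k$ words and each $e_j(x)$-times-$t^k$ term by the appropriately symmetrized noncommutative expression; finally substituting $x_i \mapsto \alpha_i$ (formally, evaluating $T_\phi$) collapses the word variables. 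The upshot is a concrete matrix identity $\sum (\text{words in the }\alpha_i) = 0$, graded by the multidegree $(n_1,\dotsc,n_d)$ recording how many times each $\alpha_i$ appears.

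**Extracting the useful components.** The key observation is that for a fixed multidegree supported on a single index $i$ — i.e. multidegree $(0,\dotsc,0,m,0,\dotsc,0)$ with $m \le d$ in the $i$-th slot — the relation one reads off is a univariate polynomial identity $p_m(\alpha_i) = 0$, where $p_m$ is essentially the coefficient-extraction of $\prod_j(t-x_j)$ at the monomial $x_i^m$ after symmetrization. Running over $m = 0,\dotsc,d$ these combine (using $\chr(\bk) > d$ so that the relevant multinomial/binomial coefficients are invertible) to force $\alpha_i$ to satisfy the polynomial $(t-1)t^{d-1}$ or something of that shape; in particular $\alpha_i(\alpha_i - \id)$ annihilates a power of $\alpha_i$, and a short argument (semisimplicity is \emph{not} yet available, so one argues via the characteristic polynomial of $\alpha_i$ being a product of $t$'s and $(t-1)$'s) shows that if $\alpha_i \ne 0$ for some $i$ — which must happen since $\sum_i \alpha_i$ has trace $n$, forcing at least one $\alpha_i$ to have nonzero trace and hence a nonzero, in fact unital-adjacent, eigenvalue — then $\alpha_i$ has a $1$-eigenvector. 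That gives (1). For (2), I would look at the multidegree components supported on exactly two indices $i \ne j$, analogous to the $y_a$ computation in the proof of Theorem~\ref{thm:roots}: the mixed terms of degree $(1,1)$ or $(m,1)$ produce relations among $\alpha_i\alpha_j$ and $\alpha_j\alpha_i$, and restricting these to a $1$-eigenvector $m$ of $\alpha_i$ (so $\alpha_i m = m$) will directly pin down $\alpha_j m$.

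**Carrying out (2) concretely.** Given $\alpha_i m = m$, I would apply the degree-$(d-1,1)$ component of $\chi_d(T)=0$ to $m$. Because $\alpha_i$ acts as the identity on $m$, every occurrence of $\alpha_i$ in a word ending at $m$ simplifies, and the word $\alpha_i^{d-1}\alpha_j$ (in all its symmetrized orderings) applied to $m$ reduces to a scalar multiple of $\alpha_j m$ plus lower-order terms that are themselves scalar multiples of $\alpha_j m$ and $m$; matching against the $x_j$-free part forces a linear relation $c\,\alpha_j m = 0$ with $c$ a nonzero integer (hence a unit, as $\chr(\bk) > d$). Thus $\alpha_j m = 0$, which is (2). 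I would double-check the scalar $c$ by testing on the genuine representation of $\bk^{\times d}$, where (2) is obviously true, to make sure the combinatorial identity has been symmetrized correctly.

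**Main obstacle.** The real work is bookkeeping the symmetrization map. The noncommutative lift of $e_j(x)\,t^k$ is a sum over shuffles, and keeping track of which orderings of $\alpha_i$'s and $\alpha_j$'s appear — and with what (integer) coefficients — in each multidegree component is where errors creep in. I expect the main obstacle to be verifying that, in the single-index components, the coefficients assemble to give exactly a polynomial whose roots are only $0$ and $1$ (with the correct multiplicities), rather than some other polynomial that would allow spurious eigenvalues; the hypothesis $\chr(\bk) > d$ is precisely what is needed to invert the binomial coefficients appearing here, and pinning down that this is the \emph{only} place characteristic matters is the delicate point. Once the single-index and two-index components are understood, (1) and (2) follow by the short linear-algebra arguments sketched above.
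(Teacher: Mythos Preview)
Your approach to part~(2) is essentially the paper's: extract the coefficient of a mixed monomial in the $x_i$'s (the paper uses $x_i x_j x_i^{d-2}$) and apply the resulting identity to $m$. However, your expectation that this produces a relation $c\,\alpha_j m = 0$ with $c$ a nonzero \emph{scalar} is too optimistic. The simplification ``$\alpha_i$ acts as the identity on $m$'' only applies to copies of $\alpha_i$ hitting $m$ before $\alpha_j$ does; once $\alpha_j$ is applied you no longer know how $\alpha_i$ acts on $\alpha_j m$, so terms like $\alpha_i \alpha_j m$ survive. What one actually obtains from the monomial $x_i x_j x_i^{d-2}$ is $(d-1)!\,(2\alpha_i - 1)\alpha_j m = 0$, and to conclude $\alpha_j m = 0$ one must separately argue that $2\alpha_i - 1$ is invertible. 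That, in turn, comes from the $x_i^d$-component, which yields $\alpha_i^{d-1}(\alpha_i - 1) = 0$ and hence forces the eigenvalues of $\alpha_i$ into $\{0,1\}$. So the two-step structure you anticipated collapses into a single step is in fact necessary.

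Your argument for part~(1) has a genuine gap. You assert that $\sum_i \alpha_i$ has trace $n$, but this presupposes $\sum_i \alpha_i = \id$, which is \emph{not} a hypothesis of the lemma and does not follow from the single-index components you propose to extract. Those components give only $\alpha_i^{d-1}(\alpha_i - 1) = 0$, which is perfectly compatible with every $\alpha_i$ being nilpotent; your parenthetical ``if $\alpha_i \ne 0$ \dots\ then $\alpha_i$ has a $1$-eigenvector'' is false for exactly this reason. Ruling out the possibility that \emph{no} $\alpha_i$ has a $1$-eigenvector requires information that couples the $\alpha_i$'s together. The paper obtains this by a completely different route: it passes to the \emph{commutative} image, where $\chi_d(T)=0$ becomes $\prod_i(T-x_i)=0$ in $M_n(S)$ with $S=\bk[x_1,\dotsc,x_d]$, regards $S^n$ as a module over $R = S[t]/\prod_i(t-x_i)$, and uses a localization argument (inverting $\prod_{i\ne j}(x_i-x_j)$, which splits $R$) to see that some fibre $M_i = S^n \tensor_R R/(t-x_i)$ has positive rank; specializing $x_j \mapsto \delta_{ij}$ then shows $\alpha_i - 1$ has nontrivial kernel. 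This global module-theoretic input is what your multidegree bookkeeping is missing.
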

\begin{proof}
(1.) Let $S = \bk[x_1,\dotsc,x_d]$ as an $A = \bk\langle x_1,\dotsc,x_d\rangle$ module in the obvious way.  Then the image of $\chi_d$ in $\bk[x_1,\dotsc,x_d,t]$ is $p(t) =n! (t - x_1)\dotsm(t-x_d)$, where now the order of the terms does not matter.  Hence $T$ satisfies $(T-x_1)\dotsm (T - x_d) = 0$ in $M_n(S)$.  So we can view $S^n$ as an $R=\bk[x_1,\dotsc,x_d,t]/(p(t))$-module $M$.  For each $i$ consider the quotient $S_i := R/(t-x_i)$, which is isomorphic to $S$ under the natural map $S \to S_i$.  Define $M_i = M \tensor_R S_i$.  Since the map $S \to S_1 \times \dotsm \times S_d$ is an isomorphism after inverting $a = \prod_{i \neq j}(x_i - x_j)$ and $a$ is a nonzerodivisor on $M$, the natural map $M \to M_1 \oplus \dotsm \oplus M_d$ is injective.  Hence there is some $i$ such that $M_i$ has positive rank.  Consider $\bar{M} := M/(x_1,\dotsc,x_{i-1},x_i - 1,x_{i+1},\dotsc,x_d)M$ and $\bar{M}_i := M_i/(x_1,\dotsc,x_{i-1},x_i-1,x_{i+1},\dotsc,x_d)M_i$.  Now since $M_i$ (is finitely generated and) has positive rank $\bar{M}_i \neq 0$.  Observe that since $M = S^d$, the natural map $\bk^d \to \bar{M}$ is an isomorphism.  Moreover the action of $t$ on $\bar{M}$ is identified with the action of $\alpha_i$.  Now, $\bar{M}_i = \bar{M}/(t-x_i)\bar{M} = \bar{M}/(\alpha_i - 1)\bar{M} \neq 0$.  Hence $\alpha_i - 1$ is not invertible, $\alpha_i - 1$ has nonzero kernel, and $\alpha_i$ has a 1-eigenvector.

(2.) Let us compute $\chi(x_1,\dotsc,x_d,T)$.  We denote by $\delta^i_j$ the Kronecker function.  We have
\begin{align*} \chi_d(x_1,\dotsc,x_d,T) &= \sum_{\sigma \in S_d}{ (\sum_{i=1}^d{ x_i \alpha_i} - x_{\sigma(1)} ) \dotsm ( \sum_{i=1}^d{ x_i \alpha_i } - x_{\sigma(d)} ) } \\
& = \sum_{\sigma \in S_d}{ \prod_{j=1}^d\big(\sum_{i=1}^d{ x_i( \alpha_i - \delta^{i}_{\sigma(j)})} \big)} \\
& = \sum_{1 \leq i_1,\dotsc,i_d \leq d}{ x_{i_1}\dotsm x_{i_m}\big( \sum_{\sigma \in S_d}{(\alpha_{i_1} - \delta^{i_1}_{\sigma(1)})\dotsm (\alpha_{i_d} - \delta^{i_d}_{\sigma(d)}) } \big) }.
\end{align*}
In the second line the term order matters so the product is taken in the natural order $j = 1,2,\dotsc,d$.  Now suppose that $\chi_d(x_1,\dotsc,x_d,T) = 0$.  Then for all $1 \leq i_1, \dotsc,i_d \leq d$ we have
\begin{equation}\label{main-equation}
 \sum_{\sigma \in S_d}{ (\alpha_{i_1} - \delta^{i_1}_{\sigma(1)} )\dotsm (\alpha_{i_d} - \delta^{i_d}_{\sigma(d)}) } =0. 
\end{equation}

For each $j \neq i$, we consider the noncommutative monomial $x_i x_j x_i^{d-2}$ and its equation \eqref{main-equation},
\begin{equation}\label{ijequation} 
\sum_{\sigma \in S_d}{ (\alpha_i - \delta^i_{\sigma(1)})(\alpha_j - \delta^j_{\sigma(2)})(\alpha_i - \delta^i_{\sigma(3)})\dotsm(\alpha_i - \delta^i_{\sigma(d)})} = 0. 
\end{equation}
Note that since $\alpha_i(m) = m$, we calculate
\[ (\alpha_i - \delta^i_{\sigma(3)})\dotsm(\alpha_i - \delta^i_{\sigma(d)})m = 
\begin{cases} 
m & i \notin \{\sigma(3),\dotsc,\sigma(d)\}, \\ 
0 & i \in \{\sigma(3),\dotsc,\sigma(d)\}. 
\end{cases}
\]  Therefore applying \eqref{ijequation} to $m$ and simplifying we get
\begin{align*}
\sum_{\sigma \in S_d, \sigma(1)=i}{ (\alpha_i - 1)(\alpha_j - \delta^j_{\sigma(2)})m } + \sum_{\sigma \in S_d, \sigma(2) = i}{ \alpha_i \alpha_j m } &= (d-1)!\big( (\alpha_i - 1)(\alpha_j - \delta^j{\sigma(2)})m + \alpha_i \alpha_j m \big) \\
& = (d-1)!\big( (\alpha_i - 1)\alpha_j m + \alpha_i \alpha_j m \big) \\
& = (d-1)!(2\alpha_i - 1)\alpha_j m \\
& = 0,
\end{align*}
where passing from the first line to the second we use the fact that $(\alpha_i - 1)\delta^j_{\sigma(2)}m = 0$.  

Now, consider the special case of \eqref{main-equation} corresponding to $x_i^d$:
\[ \sum_{\sigma \in S_d}{ (\alpha_i - \delta^i_{\sigma(1)})\dotsm(\alpha_i - \delta^i_{\sigma(d)}) } = \sum_{j = 1}^d \sum_{\sigma\in S_d, \sigma(j) = i}{ \alpha_i^{j-1} (\alpha_i - 1) \alpha_i^{d-j-1} } = d! \alpha_i^{d-1}(\alpha_i - 1) = 0. \]
Since $\alpha_i^{d-1}(\alpha_i - 1) = 0$ it follows that $2\alpha_i - 1$ is invertible.  However, $(2\alpha_i - 1)\alpha_jm = 0$ so $\alpha_j m = 0$.
\end{proof}

\begin{proof}[Proof of Theorem \ref{thm:noncommutative}]
($\Leftarrow$)  We proceed by induction on $\dim(M)$ and fix an identification $M \cong \bk^n$.  Suppose $n=1$.  Then by Lemma \ref{lem:actions}, there is some $i$ and some $m \in \bk^1$ such that $\alpha_i(m) = m$.  Moreover, $\alpha_j m = 0$ for all $j \neq 0$.  Since $m$ spans $\bk^1$, the $\alpha_i$ satisfy the neccessary relations for $\phi$ to factor through an algebra morphism.

Now given $\alpha_1,\dotsc,\alpha_d \in M_n(\bk)$, Lemma \ref{lem:actions} implies that we can find an element $m \in \bk^d$ such that $\bk m \subset \bk^d$ is stable under the action of $\alpha_1,\dotsc,\alpha_d$.  Let $M_n(\bk,m) \subset M_n(\bk)$ be the algebra of operators that preserve $\bk m$.  Then there is a surjective algebra homomorphism $M_n(\bk,m) \onto M_{n-1}(\bk)$.  Since $\alpha_1,\dotsc,\alpha_d \in M_n(\bk,m)$ we find that $T \in M_n(\bk,m)\tensor_\bk \bk\langle x_1,\dotsc,x_d\rangle$.  So if $\alpha'_1,\dotsc,\alpha'_d \in M_{n-1}(\bk)$ are the images of $\alpha_1,\dotsc,\alpha_d$ then $T' = x_1 \alpha'_1 + \dotsc + x_d \alpha'_d$ satisfies $\chi_d$.  By induction we see that $(\alpha'_i)^2 = \alpha'_i$ and $\alpha'_i \alpha'_j = 0$ for $i \neq j$.  In particular, there is a codimension 1 subspace of $\bk^{n-1}$ preserved by $\alpha'_1,\dotsc,\alpha'_d$.  Its inverse image in $\bk^n$ (we identify $\bk^{n-1}$ with $\bk^n/\bk m$) is then a codimension one subspace $V' \subset \bk^d$ which is invariant under $\alpha_1,\dotsc,\alpha_d$.  Again by induction, $\alpha_i^2 - \alpha_i$ and $\alpha_i \alpha_j (i\neq j)$ annihilate $V'$.  There is some $i$ such that $\alpha_i$ acts by the identity on $\bk^n/V'$.  Since $\alpha_i^{d-1}(\alpha_i - 1) = 0$, the geometric multiplicity of 1 as an eigenvalue of $\alpha_i$ is equal to its algebraic multiplicity.  So there is a 1-eigenvector $m \in \bk^n$ whose image in $\bk^n / V'$ is nonzero.  Again Lemma \ref{lem:actions} implies that $\alpha_j m = 0$ for $j \neq i$.  Hence the relations $\alpha_i^2 - \alpha_i$ and $\alpha_i \alpha_j$ annihilate a basis for $\bk^n$ and hence annihilate $\bk^n$.

($\Rightarrow$)  Suppose that $\phi$ is an algebra map.  Then we have $\alpha_i^2 = \alpha_i$ for all $i$ and $\alpha_j\alpha_i = 0$ if $i \neq j$.  Decompose $\bk^n = V_1 \oplus \dotsc \oplus V_d$ where $V_i = \alpha_i(\bk^n)$.  Then $T$ preserves $V_i \tensor \bk\langle x_1,\dotsc,x_d\rangle$ for each $i$.  So we can view $T$ as an element of $\prod_{i=1}^d\End_\bk(V_i) \tensor \bk\langle x_1,\dotsc,x_d\rangle \subset M_n( \bk\langle x_1,\dotsc,x_d\rangle )$.  Since $(T - x_i)$ vanishes identically on $V_i \tensor \bk\langle x_1,\dotsc,x_d \rangle$ we see that for each $\sigma \in S_d$ and each $i$ the image of $(T - x_{\sigma(1)})\dotsm (T-x_{\sigma(d)})$ vanishes in $\End_\bk(V_i) \tensor \bk\langle x_1,\dotsc,x_d \rangle$ and hence in $M_n(\bk\langle x_1,\dotsc,x_d\rangle)$.  Since all of the terms of $\chi_d(T)$ vanish in $M_n(\bk\langle x_1,\dotsc,x_d \rangle)$, so does $\chi_d(T)$.
\end{proof}

\section*{Questions}
There are many natural questions that surround the notion of characteristic morphism.  We point out a few of them.

\begin{ques}
What are the irreducible characteristic morphisms for $A = \bk^{\times d}$?  Are there infinitely many for $d \geq 3$?
\end{ques}

Replacing a commutative semisimple algebra with a semisimple algebra, Theorem \ref{thm:roots} fails to hold.  Indeed, the map $\phi:\Mat_d(\bk) \to \Mat_d(\bk)$ defined by $\phi(M) = M^T$ is not a homomorphism, but does satisfy the hypotheses of Theorem \ref{thm:roots}.  Moreover, $\phi$ is a characteristic morphism.

\begin{ques}
Is there a characterization of when a linear map $\phi:\Mat_d(\bk) \to \Mat_r(\bk)$ is a homomorphism along the lines of Theorem \ref{thm:roots}? 
\end{ques}

Let $V$ is a finite dimensional vector space and $F(t) \in \Sym^\bt(V^\vee)[t]$ be monic and homogeneous.  Given $v \in V$ we can consider the image $F_v(t)$ of $F(t)$ under the homomorphism $\Sym^\bt(V^\vee)[t] \to \bk[t]$ induced by $v:V^\vee \to \bk$.  The main theorem of \cite{CK} implies that there always exists a linear map $\phi:V \to \Mat_r(\bk)$ for some $r$ such that $F_v(\phi(v)) =0$ for all $v \in V$.  There is a natural non-commutative generalization of this problem.  

\begin{ques}
For which monic, homogenous elements $F(t)$ of $\rmT(V^\vee) \ast \bk[t]$, does there exist an element $\phi^\vee \in V^\vee \tensor \Mat_r(V)$ for some $r$ such that $F(\phi^\vee) = 0$ in $\rmT(V^\vee) \tensor \Mat_r(\bk)$?
\end{ques}

If $F(t)$ is the symmetrization of the characteristic polynomial of an algebra structure on $V$ then we have an affirmative answer.  However, if $F(t) = t^2 - u \tensor v$ where $u,v$ are linearly independent, then there is no such element.

\bibliographystyle{alpha}
\bibliography{cpr.bib}

\begin{thebibliography}{Pap00}

\bibitem[CK15]{CK}
Adam Chapman and Jung-Miao Kuo.
\newblock On the generalized {C}lifford algebra of a monic polynomial.
\newblock {\em Linear Algebra Appl.}, 471:184--202, 2015.

\bibitem[LB03]{L}
L.~Le~Bruyn.
\newblock Three talks on noncommutative geometry @n.
\newblock {\tt arXiv:math/0312221 [math.AG]}, 2003.

\bibitem[Pap00]{P}
Christopher~J. Pappacena.
\newblock Matrix pencils and a generalized {C}lifford algebra.
\newblock {\em Linear Algebra Appl.}, 313(1-3):1--20, 2000.

\bibitem[Pro87]{Pr}
Claudio Procesi.
\newblock A formal inverse to the {C}ayley-{H}amilton theorem.
\newblock {\em J. Algebra}, 107(1):63--74, 1987.

\end{thebibliography}

\end{document}